\definecolor{verydarkblue}{rgb}{0,0,0.5}
\theoremstyle{plain}
\newtheorem{introtheorem}{Theorem}
\crefname{introtheorem}{Theorem}{Theorems}
\newaliascnt{introcorollary}{introtheorem}
\newtheorem{introcorollary}[introcorollary]{Corollary}
\crefname{introcorollary}{Corollary}{Corollaries}
\newtheorem{theorem}{Theorem}[section]
\crefname{theorem}{Theorem}{Theorems}
\newaliascnt{proposition}{theorem}
\newtheorem{proposition}[proposition]{Proposition}
\crefname{proposition}{Proposition}{Propositions}
\newaliascnt{lemma}{theorem}
\newtheorem{lemma}[lemma]{Lemma}
\crefname{lemma}{Lemma}{Lemmas}
\newaliascnt{corollary}{theorem}
\newtheorem{corollary}[corollary]{Corollary}
\crefname{corollary}{Corollary}{Corollaries}
\newaliascnt{problem}{theorem}
\crefname{problem}{Problem}{Problems}
\newaliascnt{conjecture}{theorem}
\crefname{conjecture}{Conjecture}{Conjectures}
\newaliascnt{claim}{theorem}
\crefname{claim}{Claim}{Claims}
\theoremstyle{definition}
\newaliascnt{definition}{theorem}
\newtheorem{definition}[definition]{Definition}
\crefname{definition}{Definition}{Definitions}
\newaliascnt{notation}{theorem}
\crefname{notation}{Notation}{Notations}
\newaliascnt{situation}{theorem}
\crefname{situation}{Situation}{Situations}
\theoremstyle{remark}
\newaliascnt{remark}{theorem}
\newtheorem{remark}[remark]{Remark}
\crefname{remark}{Remark}{Remarks}
\newaliascnt{example}{theorem}
\newtheorem{example}[example]{Example}
\crefname{example}{Example}{Examples}
\newaliascnt{question}{theorem}
\crefname{question}{Question}{Questions}
\crefname{step}{Step}{Steps}
\numberwithin{figure}{section}
\numberwithin{equation}{section}
\def\Z{{\mathbb Z}}
\def\Q{{\mathbb Q}}
\def\C{{\mathbb C}}
\def\L{{\mathbb L}}
\def\A{{\mathbb A}}
\def\P{{\mathbb P}}
\def\cF{\mathcal{F}}
\def\cM{\mathcal{M}}
\def\I{\mathcal{I}}
\def\O{\mathcal{O}}
\def\fm{\mathfrak{m}}
\def\fn{\mathfrak{n}}
\def\a{\alpha}
\def\b{\beta}
\def\f{\varphi}
\def\ff{\psi}
\def\e{\eta}
\def\k{k}
\def\n{\nu}
\def\m{\mu}
\def\om{\omega}
\def\s{\sigma}
\def\t{\tau}
\def\Om{\Omega}
\def\.{\cdot}
\let\circum\^
\def\^{\widehat}
\def\~{\widetilde}
\def\o{\circ}
\def\rat{\dashrightarrow}
\def\surj{\twoheadrightarrow}
\def\({\left(}
\def\){\right)}
\def\*{{}^*}
\renewcommand{\and}{ \ \ \text{ and } \ \ }
\def\reg{\mathrm{reg}}
\def\sing{\mathrm{sing}}
\def\tor{\mathrm{tor}}
\def\Jac{\mathrm{Jac}}
\DeclareMathOperator{\im} {Im}
\DeclareMathOperator{\Spec} {Spec}
\DeclareMathOperator{\ord} {ord}
\DeclareMathOperator{\Fitt} {Fitt}
\DeclareMathOperator{\Bs} {Bs}
\DeclareMathOperator{\Tor} {Tor}
\DeclareMathOperator{\diag} {diag}
\DeclareMathOperator{\Var} {Var}
\DeclareMathOperator{\EP}{EP}
\def\embdim{\mathrm{edim}}
\begin{document}

\title{Relative Mather discrepancy on arc spaces}


\author{Tommaso de Fernex}
\address{Department of Mathematics, University of Utah, 155 South 1400 East,
Salt Lake City, UT 84112, USA}
\email{{\tt defernex@math.utah.edu}}

\author{Zach Mere}
\address{Department of Mathematics, University of Utah, 155 South 1400 East,
Salt Lake City, UT 84112, USA}
\email{{\tt zachmere@math.utah.edu}}

\subjclass[2020]{%
Primary {\scriptsize 14E18};
Secondary {\scriptsize 14B05}.}
\keywords{Motivic integration, Nash blow-up, jet scheme, $K$-equivalence.}

\thanks{%
The research of the first author was partially supported by NSF grant DMS-2001254.
The research of the second author was partially supported by NSF RTG DMS-1840190.
}

\begin{abstract}
Given any generically \'etale morphism of varieties $f \colon X \to Y$, 
we define the relative Mather discrepancy function on the arc space $X_\infty$
of the domain and show that this function computes
the dimension of the kernel of the differential map of the induced morphism 
on arc spaces $f_\infty \colon X_\infty \to Y_\infty$.
We relate this result to the change-of-variable formula in motivic integration.
We introduce the notion of $\^K$-equivalence, which agrees with $K$-equivalence for smooth varieties, and 
prove that arc spaces of $\^K$-equivalent varieties of arbitrary
characteristic have the same motivic volume. 
\end{abstract}

\maketitle

\section{Introduction}

Let $X$ be a variety over a field $k$. Its arc space, denoted by $X_\infty$, 
parameterizes formal arcs $\a \colon \Spec k_\a[[t]] \to X$, where $k_\a$
is the residue field of $\a$ viewed as a point of $X_\infty$. 

A formula for the sheaf of differentials of $X_\infty$ was introduced in \cite{dFD20}
to study properties of local rings in $X_\infty$
and their connections to the singularities of $X$. 
Further studies in this direction were carried out in \cite{CdFD22,CdFD24}. 
A key ingredient in these studies is the analysis of the differential of the map on arc spaces 
$f_\infty \colon X_\infty \to Y_\infty$ induced by a generically \'etale morphism
of varieties $f \colon X \to Y$. Given a point $\a \in X_\infty$ and setting $\b = f_\infty(\a) \in Y_\infty$, 
one looks at the differential
\[
df_{\infty,\a} \colon (\Om_{Y_\infty/k} \otimes \k_\b) \otimes k_\a  
\to \Om_{X_\infty/k} \otimes \k_\a.
\]

Interestingly, as long as $\a$ is not fully contained within the 
support of $\Om_{X/Y}$, this map is always surjective. The question is about its kernel. 
In good situations (e.g., if either $X$ or $Y$ are smooth), its dimension
can be computed in terms of the order of vanishing of suitable Jacobian ideals along the given arcs, 
but in general all prior results on this 
only give upper or lower bounds.

The main goal of this paper is to compute the kernel of $df_{\infty,\a}$. 
Using a construction closely related to Mather's approach to Chern classes on singular varieties
(cf.\ \cite{Mat70,Mac74}) along the lines of \cite{dFEI08},
we introduce a function on the space of arcs $X_\infty$
of $X$ which we call the \emph{relative Mather discrepancy function},
or \emph{Mather ramification function}, depending on the context
(e.g., if $f$ is birational, or finite).

This function, denoted by $\ord_{\^K_{X/Y}}$, 
is defined on $X_\infty \setminus (X_\sing)_\infty$
by taking the Nash blow-up $\n \colon \^X \to X$ of $X$, 
looking at the natural map
\[
\f \colon \n^*f^*\Om_Y^d \otimes \O_{\^X}(-1) \to \n^*\Om_X^d \otimes \O_{\^X}(-1) \surj \O_{\^X}
\]
where $\O_{\^X}(1) := (\n^*\Om_X^d)/_\tor$ is the tautological line bundle of the Nash blow-up, 
and setting
\[
\ord_{\^K_{X/Y}}(\a) := \ord_{\~\a}(\im(\f))
\]
for $\a \in X_\infty \setminus (X_\sing)_\infty$, 
where $\~\a \in \^X_\infty$ is the unique lift of $\a$.
The notation is suggested by the situation where $f \o \n$ factors through 
the Nash blow-up $\m \colon \^Y \to Y$ via a morphism $\^f \colon \^X \to \^Y$, 
in which case we have $\im(\f) = \O_{\^X}(-\^K_{X/Y})$
for an effective Cartier divisor $\^K_{X/Y} \in |\O_{\^X}(1) \otimes \^f^*\O_{\^Y}(-1)|$
(cf.\ \cref{l:^K}). 

By construction, $\ord_{\^K_{X/Y}}$ agrees with 
the order of the relative canonical divisor $K_{X/Y}$
if both $X$ and $Y$ are nonsingular, and with 
the order of the Jacobian ideal $\Jac_f \subset \O_X$ of $f$ if $X$ is nonsingular.
If $Y$ is nonsingular and $X$ is allowed to be singular, then we have 
$\ord_{\^K_{X/Y}} = \ord_{\Jac_f} - \ord_{\Jac_X}$.
In general, if both $X$ and $Y$ are singular, then the function $\ord_{\^K_{X/Y}}$ provides a new 
measure of discrepancy (or ramification) along $f$.

We can now state our main result. 

\begin{introtheorem}[cf.\ \cref{t:df-infty}]
\label{t-intro:df-infty}
Let $f \colon X \to Y$ be a generically \'etale morphism of varieties over a field $k$.
Let $\a \in X_\infty$ and $\b = f_\infty(\a) \in Y_\infty$. 
Assume that $X$ is nonsingular at the generic point $\a(\e)$ of the arc 
and $f$ is \'etale in a neighborhood of $\a(\e)$.
Then the differential map 
$df_{\infty,\a} \colon (\Om_{Y_\infty/k} \otimes k_\b) \otimes k_\a 
\to \Om_{X_\infty/k} \otimes k_\a$
is surjective with kernel of dimension
\[
\dim(\ker (df_{\infty,\a})) = \ord_{\^K_{X/Y}}(\a). 
\]
\end{introtheorem}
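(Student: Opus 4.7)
The plan is to (a) identify the tangent map $T_\alpha X_\infty \to T_\beta Y_\infty$ (with $\beta = f_\infty(\alpha)$) as the $R$-linear dual of a map between free rank-$d$ modules over $R = \kappa_\alpha[[t]]$ obtained from the arc pullback of $df$; (b) compute its determinant on the Nash blow-up, where $\ord_{\widehat{K}_{X/Y}}(\alpha)$ appears by definition; and (c) dualize to deduce the statement about $df_{\infty,\alpha}$.

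Write $K = \kappa_\alpha$, $R = K[[t]]$, and view $\alpha, \beta$ as morphisms $\Spec R \to X, Y$. The arc pullback $\alpha^*(df)\colon \beta^*\Omega_{Y/k}\to \alpha^*\Omega_{X/k}$ is an $R$-linear map of finitely generated $R$-modules. Under the hypotheses that $X$ is smooth and $f$ is locally an isomorphism at $\alpha(\eta)$, the torsion-free quotients
\[
\psi_\alpha\colon \beta^*\Omega_{Y/k}/\tor \lra \alpha^*\Omega_{X/k}/\tor
\]
are both free $R$-modules of rank $d := \dim X$, and $\psi_\alpha$ is a generic isomorphism. Using the identification $T_\alpha X_\infty = \Hom_R(\alpha^*\Omega_{X/k}, R)$ (and similarly for $T_\beta Y_\infty$) together with $\Hom_R(-_\tor, R) = 0$, the tangent map $T_\alpha X_\infty \to T_\beta Y_\infty$ coincides with the $R$-dual $\psi_\alpha^\vee$. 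Setting $e := v_t(\det\psi_\alpha)$, Smith normal form over the DVR $R$ gives $\length_R\coker(\psi_\alpha^\vee) = e$, so the tangent map is injective with cokernel of $K$-dimension $e$. Since $T_z Z = (\Omega_{Z/k}\otimes\kappa_z)^\vee$ for any scheme $Z$ and any residue-field point $z$, applying $\Hom_K(-, K)$ (which is exact on $K$-vector spaces) to the resulting short exact sequence yields that $df_{\infty,\alpha}$ is surjective with $\dim_K\ker(df_{\infty,\alpha}) = e$.

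To identify $e$ with $\ord_{\widehat{K}_{X/Y}}(\alpha)$, lift $\alpha$ uniquely to $\tilde\alpha \in \widehat{X}_\infty$, which is possible since $\alpha(\eta)\in X_\sm$ where $\nu\colon \widehat{X}\to X$ is an isomorphism. Pulling back the defining surjection $\nu^*\Omega_X^d \surj \mathcal{O}_{\widehat{X}}(1)$ via $\tilde\alpha$ gives a surjection $\alpha^*\Omega_X^d \surj R$, which factors through $\alpha^*\Omega_X^d/\tor$ to produce a surjection of rank-one free $R$-modules, necessarily an isomorphism. Since $\alpha^*\Omega_X^d/\tor \cong \wedge^d(\alpha^*\Omega_X/\tor)$ for finitely generated modules over the DVR $R$, the top exterior power of $\psi_\alpha$ is identified (up to the twist by $\mathcal{O}_{\widehat{X}}(-1)$) with the pullback $\tilde\alpha^*\varphi$, whose image in $R$ is $(t^{\ord_{\widehat{K}_{X/Y}}(\alpha)})$ by the definition of the relative Mather discrepancy. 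Hence $e = v_t(\det\psi_\alpha) = \ord_{\widehat{K}_{X/Y}}(\alpha)$.

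The main obstacle is the clean identification $\alpha^*\Omega_X^d/\tor \cong \tilde\alpha^*\mathcal{O}_{\widehat{X}}(1)$, which is what converts the dimension question for $\ker(df_{\infty,\alpha})$ into a rank-one determinantal computation on $\widehat{X}$. The related results in \cite{dFD20,CdFD22,CdFD24} in comparable settings yield only two-sided bounds on $\dim_K\ker(df_{\infty,\alpha})$; the new input producing exact equality is precisely this Nash-blow-up identification, together with the careful handling of $K$-linear duality between the infinite-dimensional tangent and cotangent spaces of the arc space at $\alpha$.
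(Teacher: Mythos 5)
Your proposal is correct and uses the same essential ingredients as the paper: pull back $df$ along the arc, pass to the torsion-free quotients (both free of rank $d$ over $R = K[[t]]$ since $X$ is smooth and $f$ is an isomorphism near $\alpha(\eta)$), read off the matrix $A$ and its determinant via Smith normal form, and then identify $v_t(\det A)$ with $\ord_{\widehat K_{X/Y}}(\alpha)$ via the canonical surjection $\alpha^*\Omega_X^d \twoheadrightarrow \alpha^*\Omega_X^d/\mathrm{tor} \cong \tilde\alpha^*\mathcal O_{\widehat X}(1)$ on the Nash blow-up, using that top exterior powers and torsion-free quotients commute with arc pullback away from the support of the torsion.

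The one place where you diverge from the paper is in the final step. The paper works directly on the cotangent side: using the identification $\Omega_{X_\infty/k}\otimes\kappa_\alpha\cong\alpha^*\Omega_X\otimes_R P_\infty^K$ from \cite{dFD20}, it tensors the arc-pullback of $f^*\Omega_Y\to\Omega_X$ with $P_\infty^K$ (which kills $t$-torsion) and observes directly that multiplication by $A=\diag(t^{e_1},\dots,t^{e_d})$ on $(P_\infty^K)^d$ is surjective with $e$-dimensional kernel. You instead compute on the tangent side, viewing $T_\alpha X_\infty=\Hom_R(\alpha^*\Omega_X,R)$ (equivalently $\Hom_K(\alpha^*\Omega_X\otimes_R P_\infty^K,K)$ via $\Hom_K(P_\infty^K,K)\cong R$), get that $\psi_\alpha^\vee$ is injective with $e$-dimensional cokernel, and then ``$K$-dualize back.'' This last step is stated a bit too quickly: $\Hom_K(T_\alpha X_\infty,K)$ is the double dual of the cotangent space and, since that space is infinite-dimensional, is strictly larger than $\Omega_{X_\infty/k}\otimes\kappa_\alpha$, so dualizing the tangent sequence does not literally recover the cotangent sequence. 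The conclusion still follows, but it needs a sentence or two: injectivity of $(df_{\infty,\alpha})^\vee$ forces $df_{\infty,\alpha}$ to be surjective (any nonzero functional on its cokernel would give a nonzero element of $\ker(df_{\infty,\alpha})^\vee$), and then $\coker(df_{\infty,\alpha})^\vee\cong(\ker df_{\infty,\alpha})^\vee$ has $K$-dimension $e<\infty$, which forces $\ker df_{\infty,\alpha}$ itself to be $e$-dimensional. With that clarification your argument is complete; the paper's direct tensoring with $P_\infty^K$ is arguably cleaner because it sidesteps the double-dual subtlety altogether, but the two routes carry the same mathematical content.
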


As a corollary of this result, assuming the ground field is perfect,
we obtain the following formula comparing embedding dimensions of the corresponding
local rings.

\begin{introcorollary}[cf.\ \cref{c:Tf-infty}]
\label{c-intro:df-infty}
With the same assumptions as in \cref{t-intro:df-infty},
if $k$ is perfect, then we have
\[
\embdim(\O_{Y_\infty,\b}) = \embdim(\O_{X_\infty,\a}) + \ord_{\^K_{X/Y}}(\a).
\]
\end{introcorollary}

\cref{t-intro:df-infty} (or, more precisely, \cref{c-intro:df-infty}) is closely related to the change-of-variables formula in motivic integration (see \cref{r:relation-change-of-vars}). Originally introduced by Kontsevich for proper birational morphisms between smooth varieties and extended in \cite{DL99} to the setting of resolutions of singularities, the formula has since been generalized in several directions. Most recently, it was established for possibly singular Artin stacks in \cite{SU23} by reducing to the smooth case previously treated in \cite{SU21}, and for wild Deligne–Mumford stacks in \cite{Yas24}, where the approach does not rely on resolution of singularities.

Specializing to the case of varieties, the formula takes the form:
\[
\int_{Y_\infty} \L^{-\ord_Z} d\m = 
\int_{X_\infty} \L^{-\ord_{f^{-1}Z} - \ord_{\^K_{X/Y}}} d\m
\]
where $f \colon X \to Y$ is a proper birational morphism of varieties 
over a field $k$ and $Z \subset Y$ is a proper closed subscheme.
This identity follows from \cite[Theorem~1.1]{Yas24} upon restricting to varieties, once one observes that the relative Mather discrepancy function coincides with the \emph{order of the Jacobian} function introduced in \cite{CLNS18} (cf.\ \cref{r:ordjac}).

Although the equality in \cite{Yas24} is not stated in the usual Grothendieck ring 
$\^\cM_k$ due to additional conditions required in the stack-theoretic setting, the proof, when restricted to varieties, yields the formula in the classical motivic framework. 
In fact, Takehiko Yasuda has informed us of a draft version of his forthcoming book \cite{Yas21}, currently available on his homepage, where motivic integration is developed in full generality for arbitrary varieties.

Motivic integration was introduced to prove Batyrev's conjecture that
birationally equivalent complex Calabi--Yau manifolds have the same Hodge numbers \cite{Kon95,Bat99}.
Kontsevich's proof goes by showing that the two Calabi--Yau manifolds define the same class in the motivic ring
$\^\cM_\C$. 
The motivic change-of-variables formula yields an extension of this latter property to
birationally equivalent Calabi--Yau manifolds in positive characteristics. 
Here, the term \emph{Calabi--Yau} is used in a weak sense, 
with no restriction on the ground field and only requiring that 
the variety is smooth and projective and its canonical sheaf is trivial.

In characteristic zero, birationally equivalent Calabi--Yau manifolds
are examples of {$K$-equivalent} manifolds, namely, 
smooth proper varieties $Y$ and $Y'$ 
admitting a common resolution $X$ such that $K_{X/Y} = K_{X/Y'}$. 
To extend this definition to positive characteristics, we introduce the notion of \emph{$\^K$-equivalence}, where
two birationally equivalent proper varieties $Y$ and $Y'$ are said to be {$\^K$-equivalent}
if for every (equivalently, for some) variety $X$ with proper birational morphisms
$f \colon X \to Y$ and $f' \colon X \to Y'$ inducing morphisms
$\^f \colon \^X \to \^Y$ and $\^f' \colon \^X \to \^Y'$ on the Nash blow-ups, we have $\^K_{X/Y} = \^K_{X/Y'}$.
This notion does not require any assumption on singularities, and 
agrees with the notion of $K$-equivalence for nonsingular varieties.

\begin{introtheorem}[cf.\ \cref{t:^K-equiv}]
\label{t-intro:^K-equiv}
Arc spaces of $\^K$-equivalent varieties over a field $k$ have the same motivic volume. In 
particular, if the varieties are smooth, then they
define the same class in the motivic ring $\^\cM_k$. 
\end{introtheorem}

This implies, in particular, that birationally equivalent Calabi--Yau manifolds
define the same class in $\^\cM_k$ (cf.\ \cref{c:bir-CY}). This answers
a question posed to the first author by Dori Bejleri. The same result was also obtained in \cite{Yas21}. 

It is interesting to compare our approach with the one followed in 
\cite{Yas21}, where the definition of $K$-equivalence is extended to 
positive characteristics in the context of log pairs using crepant modifications. 
The two approaches are easily seen to lead to equivalent definitions
for smooth varieties.
In \cite{Yas21}, it is proved that $K$-equivalent smooth projective varieties 
define the same class in the motivic ring, and one can regard
\cref{t-intro:^K-equiv} as an extension of that result to all varieties. 
One should notice, however, that the same property does not extend to singular $K$-equivalent varieties 
(cf.\ \cref{r:flopping-contr}). 
In this regard, $\^K$-equivalence appears to be a suitable notion 
to extend the property to arbitrary varieties. 

Restricting to the smooth case, one deduces from 
\cref{t-intro:^K-equiv} that $K$-equivalent smooth varieties have the same 
same Betti numbers and \'etale Euler characteristic, 
a property that was also observed in \cite{Yas21}. 

It remains unclear, however, whether one should expect that in positive characteristic 
$K$-equivalent manifolds have the same Hodge numbers, as happens in characteristic zero. 
In characteristic zero, it is conjectured that $K$-equivalent manifolds
are $D$-equivalent (i.e., have equivalent bounded derived categories of complexes of coherent sheaves)
and $D$-equivalent manifolds have the same Hodge numbers \cite{Kaw02,Orl05}. 
However, the latter property is known to fail in positive characteristics \cite{AB23}.
This pathology, 
along with the lack of any good strategy for defining in positive characteristics an Euler function 
on the Grothendieck ring of varieties $K_0(\Var_k)$
which sends the class of a nonsingular projective variety to its Hodge polynomial 
(even assuming resolution of singularities, weak factorization, and 
a generalization of Bittner's theorem \cite{Bit04}), 
suggest that the situation may be more pathological in positive characteristics.

\subsection*{Acknowledgements}

We wish to thank Dori Bejleri for asking the question about Calabi--Yau manifolds, Christopher Chiu, 
Roi Docampo, and Will Legg for several discussions on related topics,  
and Mircea Musta\c t\u a for useful discussions and for pointing us to \cite{AB23}.
We are grateful to Takehiko Yasuda 
for useful comments and for bringing \cite{Yas24,Yas21} to our attention. 
Finally, we thank the referee for valuable comments and suggestions.

\section{Notation}

We work over a field $k$.
A \emph{variety} is a generically smooth integral separated scheme over $k$. 
The Jacobian ideal of a variety $X$ is defined by $\Jac_X := \Fitt^d(\Om_{X/k})$
where $d = \dim X$. If $f \colon X \to Y$ is a generically finite morphism
of varieties of the same dimension, then its Jacobian ideal is defined by $\Jac_f := \Fitt^0(\Om_{X/Y})$.

Given a scheme $X$ over $k$, we denote by $X_\infty$ its arc space. 
For a morphism of $k$-schemes $f \colon X \to Y$, 
we denote by $f_\infty \colon X_\infty \to Y_\infty$
the induced morphism on arc spaces. We refer
to \cite{EM09,CLNS18} for definitions and general properties. 

For an arc $\a$ on $X$, we denote by $\a$ both
the point $\a \colon \Spec k_\a \to X_\infty$ and the map
$\a \colon \Spec k_\a[[t]] \to X$.
Any arc $\a \in X$ defines a semi-valuation 
$\ord_\a \colon \O_{X,\a(0)} \to \Z_{\ge 0} \cup \{\infty\}$ given by $\ord_t \o \a^\sharp$. 
For an ideal sheaf $\I \subset \O_X$, we denote by 
$\ord_\I \colon X_\infty \to \Z \cup \{\infty\}$ the function 
given by $\ord_\I(\a) := \ord_\a(\I)$. 
If $Z \subset X$ is a closed subscheme, then we also denote by $\ord_Z$ 
the order function defined by its ideal sheaf $\I_Z \subset \O_X$.

\section{Relative Mather discrepancy}
\label{s:Mather}

In this section we extend the definition of relative Mather canonical divisor
originally given in \cite{dFEI08} in the case resolutions of singularities,
and define a relative Mather discrepancy function on arc spaces
for generically \'etale morphisms of varieties. 

Let $X$ be a variety over a field $k$, and set $d = \dim X$.

\begin{definition}
A proper birational morphism $\n \colon \^X \to X$ is a \emph{Nash blow-up} of $X$
if $\n^*\Om_X$ admits a locally free quotient of rank $d$. 
If $\n$ is universal with this property, then we refer to it as \emph{the} Nash blow-up of $X$.
We denote $\O_{\^X}(1) := (\n^*\Om_{X/k}^d)/_\tor$.
\end{definition}

\begin{remark}
The Nash blow-up of $X$ is unique up to isomorphism, constructed as the closure
of $X_\reg$ in $\P(\Om_{X/k}^d)$, and $\O_{\^X}(1)$ is a relatively ample line bundle over $X$.
\end{remark}

Let $f \colon X \to Y$ be a generically \'etale morphism of varieties of dimension $d$.
Let $\n \colon \^X \to X$ and $\m \colon \^Y \to Y$ be the Nash blow-ups of $X$ and $Y$. 
If $f \o \n$ factors through $\m$, 
then let $\^f \colon \^Y \to \^X$ be the unique morphism such that $\m \o \^f := f \o \n$. 

\begin{definition}
\label{d:Nash-ideal}
The ideal sheaf
$\fn_{X/Y} \subset \O_{\^X}$ defined by the image of the natural map 
\[
\n^* f^*\Om_{Y/k}^d \otimes \O_{\^X}(-1) \to \n^*\Om_{X/k}^d \otimes \O_{\^X}(-1) \surj \O_{\^X}
\]
is called the \emph{relative Nash ideal} of $f$.
\end{definition}

\begin{lemma}
\label{l:^K}
Assume that $f \o \n$ factors through $\m$. Then there is an effective Cartier divisor 
$\^K_{X/Y} \in |\O_{\^X}(1) \otimes \^f^*\O_{\^Y}(-1)|$
such that $\fn_{X/Y} = \O_{\^X}(-\^K_{X/Y})$. 
\end{lemma}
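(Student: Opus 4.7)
The plan is to show that the natural surjection $\n^*f^*\Om^d_{Y/k}\surj\^f^*\O_{\^Y}(1)$ induced by the Nash blow-up of $Y$ allows the map $\f$ to descend to a morphism of line bundles $\^f^*\O_{\^Y}(1)\otimes\O_{\^X}(-1)\to\O_{\^X}$; once this is established, the lemma follows from the standard dictionary between nonzero maps of line bundles on an integral scheme and effective Cartier divisors.

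For the descent, I would first note that $\O_{\^Y}(1)$ is by definition $\m^*\Om^d_{Y/k}/\tor$, so the kernel of $\m^*\Om^d_{Y/k}\surj\O_{\^Y}(1)$ is the torsion subsheaf $T\subset\m^*\Om^d_{Y/k}$. Using $\m\o\^f=f\o\n$, pulling back along $\^f$ gives a surjection $\n^*f^*\Om^d_{Y/k}=\^f^*\m^*\Om^d_{Y/k}\surj\^f^*\O_{\^Y}(1)$ whose kernel is the image of $\^f^*T$. Since $T$ is torsion on the integral scheme $\^Y$, its stalk at the generic point vanishes, so $\^f^*T$ has zero stalk at the generic point of $\^X$; therefore its image along $\n^*f^*\Om^d_{Y/k}\to\n^*\Om^d_{X/k}\surj\O_{\^X}(1)$ is a coherent subsheaf of the line bundle $\O_{\^X}(1)$ with vanishing generic stalk. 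Because $\O_{\^X}(1)$ is torsion free on the integral scheme $\^X$, this image must be zero, and so there is a unique morphism $\psi\colon\^f^*\O_{\^Y}(1)\to\O_{\^X}(1)$ through which $\f$ factors as $\psi\otimes\id_{\O_{\^X}(-1)}$, after the identification $\O_{\^X}(1)\otimes\O_{\^X}(-1)=\O_{\^X}$.

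To conclude, I observe that the canonical map $f^*\Om^d_{Y/k}\to\Om^d_{X/k}$ is an isomorphism at the generic point of $X$ (both sides identify with $\Om^d_{K/k}$ for $K$ the common function field since $f$ is birational), so $\f$ is generically the identity $\O_{\^X}\to\O_{\^X}$; in particular $\psi$ is nonzero. A nonzero map between line bundles on the integral scheme $\^X$ corresponds to a nonzero global section of $\O_{\^X}(1)\otimes\^f^*\O_{\^Y}(-1)$, and I would define $\^K_{X/Y}$ to be the associated effective Cartier divisor. A local trivialization then identifies $\im(\psi)$ with $\O_{\^X}(-\^K_{X/Y})\otimes\O_{\^X}(1)$, and after the identification above one obtains $\fn_{X/Y}=\im(\f)=\O_{\^X}(-\^K_{X/Y})$, as claimed. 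The only subtle point is the descent step in the second paragraph; once $\f$ is known to factor through $\psi\otimes\id$, the rest is the standard sections-versus-divisors correspondence on an integral scheme.
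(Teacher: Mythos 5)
Your argument is correct and is essentially the same as the paper's, which organizes the computation into a commutative square
\[
\xymatrix{
\n^* f^*\Om_{Y/k}^d \otimes \O_{\^X}(-1) \ar[r] \ar@{->>}[d]
& \n^*\Om_{X/k}^d \otimes \O_{\^X}(-1) \ar@{->>}[d]
\\
\^f^*\O_{\^Y}(1) \otimes \O_{\^X}(-1) \ar[r]
&\O_{\^X}
}
\]
and then ``compares images.'' The paper states the existence of the bottom arrow without comment, whereas you supply the justification (the descent along the surjection $\n^*f^*\Om^d_{Y/k}\surj\^f^*\O_{\^Y}(1)$, using that the kernel is the image of a pullback of a torsion sheaf and hence dies in the torsion-free line bundle $\O_{\^X}(1)$). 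The rest of your argument---nonvanishing at the generic point because $f$ is birational, and the sections/divisors dictionary on an integral scheme---coincides with the paper's implicit conclusion.
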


\begin{proof}
Using the surjection $\m^*\Om_{Y/k}^d \surj \O_{\^Y}(1)$ and the factorization 
$f \o \n =\m \o \^f$, we obtain the 
commutative diagram
\[
\xymatrix{
\n^* f^*\Om_{Y/k}^d \otimes \O_{\^X}(-1) \ar[r] \ar@{->>}[d] 
& \n^*\Om_{X/k}^d \otimes \O_{\^X}(-1) \ar@{->>}[d] 
\\
\^f^*\O_{\^Y}(1) \otimes \O_{\^X}(-1) \ar[r]
&\O_{\^X} 
}
\]
and the statement follows by comparing images.
\end{proof}

\begin{definition}
In the setting of \cref{l:^K}, 
we call $\^K_{X/Y}$ the \emph{relative Mather canonical divisor},
or \emph{Mather ramification divisor}, of $X$ over $Y$.
The first term is typically used when $f$ is birational and the latter when $f$ is finite. 
\end{definition}

\begin{remark}
\label{r:^K-exceptional}
By construction, $\^K_{X/Y}$ is supported within the locus where
$f \o \n$ is not locally an isomorphism. In particular, if $f$ is birational and $Y$ is normal, then 
$\^K_{X/Y}$ is exceptional over $Y$, i.e., $(f\o \n)_* \^K_{X/Y} = 0$. 
\end{remark}

\begin{remark}
The definition of $\^K_{X/Y}$ agrees with the one given in \cite{dFEI08} when $f$ is 
a resolution of singularities. 
See \cite[Remark~1.5]{dFEI08} for an explanation of the terminology.
\end{remark}

The following property is immediate from the definition. 

\begin{lemma}
\label{l:^K-additive}
If $g \colon Y \to Z$ is another generically \'etale morphism of varieties and 
$\m \o g$ factors through the Nash blow-up of $Z$, then $\^K_{X/Z} = \^K_{X/Y} + \^f^*\^K_{Y/Z}$. 
\end{lemma}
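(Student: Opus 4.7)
The plan is to factor the natural map defining the Nash ideal $\fn_{X/Z} = \O_{\^X}(-\^K_{X/Z})$ through the intermediate sheaf $\nu^* f^* \Omega_{Y/k}^d \otimes \O_{\^X}(-1)$, and then to identify each factor in terms of the maps defining $\^K_{X/Y}$ and $\^K_{Y/Z}$.

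First, I would let $\lambda \colon \^Z \to Z$ be the Nash blow-up of $Z$ and $\^g \colon \^Y \to \^Z$ the morphism satisfying $\lambda \o \^g = g \o \mu$. Composing the two factorizations gives $(g \o f) \o \nu = \lambda \o (\^g \o \^f)$, so $\^g \o \^f$ is the lift of $g \o f$ to the Nash blow-ups, and $\fn_{X/Z}$ is the image of the natural map $\nu^*(g \o f)^* \Omega_{Z/k}^d \otimes \O_{\^X}(-1) \to \O_{\^X}$.

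Next, by functoriality of K\"ahler differentials and wedge powers, this map factors through $\nu^* f^* \Omega_{Y/k}^d \otimes \O_{\^X}(-1)$. Using the identifications $\nu^* f^* = \^f^* \mu^*$ and $\nu^* (g \o f)^* = \^f^* \^g^* \lambda^*$, together with the surjections onto the tautological line bundles $\lambda^*\Omega_{Z/k}^d \surj \O_{\^Z}(1)$, $\mu^*\Omega_{Y/k}^d \surj \O_{\^Y}(1)$, and $\nu^*\Omega_{X/k}^d \surj \O_{\^X}(1)$, the factorization descends (since the final target $\O_{\^X}$ is a line bundle, hence torsion-free) to a sequence of maps of line bundles
\[
\^f^* \^g^* \O_{\^Z}(1) \otimes \O_{\^X}(-1) \to \^f^* \O_{\^Y}(1) \otimes \O_{\^X}(-1) \to \O_{\^X},
\]
where the first arrow is $\^f^*$ of the map defining $\fn_{Y/Z}$ (so, untwisted, it corresponds to a section of $\^f^*\O_{\^Y}(1) \otimes \^f^*\^g^*\O_{\^Z}(-1)$ with zero divisor $\^f^* \^K_{Y/Z}$), and the second is the map defining $\fn_{X/Y}$ (corresponding to a section of $\O_{\^X}(1) \otimes \^f^*\O_{\^Y}(-1)$ with zero divisor $\^K_{X/Y}$). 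The composite is then a section of $\O_{\^X}(1) \otimes \^f^*\^g^*\O_{\^Z}(-1)$ equal to the product of the two, whose zero divisor is $\^K_{X/Y} + \^f^* \^K_{Y/Z}$. On the other hand, by the same argument as in the proof of \cref{l:^K}, this composite is precisely the section whose zero divisor is $\^K_{X/Z}$, yielding $\^K_{X/Z} = \^K_{X/Y} + \^f^* \^K_{Y/Z}$.

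The main obstacle lies in the middle step: verifying that the chain of natural maps between the $\nu^*\Omega^d$ pullbacks really descends compatibly to the line bundle quotients, and that the induced maps on the quotients coincide with the $\^f$-pullback of the map defining $\fn_{Y/Z}$ and with the map defining $\fn_{X/Y}$. This is essentially a functoriality check: since the quotient targets at each stage are locally free, each map from a $\nu^*\Omega^d$-type sheaf into a line bundle uniquely factors through the corresponding torsion-free quotient, and pullback by $\^f$ is compatible with this factorization. Once the commutativity of this diagram is verified, the bookkeeping with sections and zero divisors is routine.
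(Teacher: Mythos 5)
The paper states this lemma without proof, so there is no in-paper argument to compare against. Your proof is correct and takes what is essentially the only natural approach: factor the defining map for $\fn_{X/Z}$ through $\nu^* f^* \Omega_{Y/k}^d \otimes \O_{\^X}(-1)$, descend to the tautological line bundle quotients, and use that the composite of line-bundle maps has divisor equal to the sum of the two divisors.

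One small remark on the justification of the descent. You appeal to torsion-freeness of $\O_{\^X}$, but the surjections you pass through, $\nu^* f^* g^* \Omega_{Z/k}^d \surj \^f^*\^g^*\O_{\^Z}(1)$ and $\nu^* f^* \Omega_{Y/k}^d \surj \^f^*\O_{\^Y}(1)$, are $\^f^*$-pullbacks of quotients by torsion rather than quotients by torsion themselves, so it is not automatic that their kernels are torsion. This does in fact hold here because $\^f$ is dominant between integral schemes, so the pullback of a sheaf supported on a proper closed subset is again supported on a proper closed subset, hence torsion; consequently each kernel coincides with the full torsion subsheaf and any map into a torsion-free target factors uniquely through the quotient. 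Alternatively, and perhaps more in line with your closing remark about functoriality, one can simply pull back the commutative square of \cref{l:^K} applied to $g$ (which lives on $\^Y$) along $\^f$ and paste it with the square of \cref{l:^K} applied to $f$; the uniqueness of the induced map on quotients, forced by surjectivity of the vertical arrows, then identifies the composite with the defining map for $\^K_{X/Z}$. Either way the argument closes, and the identity $\^K_{X/Z} = \^K_{X/Y} + \^f^*\^K_{Y/Z}$ follows by comparing divisors of sections as you describe.
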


\begin{remark}
It is worthwhile to point out that Jacobian ideals do not satisfy a similar additivity property,
not even up to integral closure. For example, suppose that 
$f \colon X \to Y$ is a generically \'etale morphism where $Y$ is a closed subvariety of
an affine space $\A^N_k$, and $y \in Y$ is a singular point. Let $v$ be a divisorial valuation on $X$
centered at $y$ on $Y$, and let $g \colon Y \to Z$ be the 
finite morphism induced by a general linear projection $\A^N_k \to Z = \A^d_k$
where $d = \dim X = \dim Y$. 
If the projection is general with respect to $v$, then $v(\Jac_f) = v(\Jac_{g\o f})$. On the other hand, 
$\Jac_g$ is non-trivial at $y$, hence $v(\Jac_g) > 0$. It follows that
$v(\Jac_{g\o f}) \ne v(\Jac_g) + v(\Jac_f)$. 
\end{remark}

In general, $\^K_{X/Y}$ is not defined if $\n\o f$ does not factor through $\m$, 
and we need to work with the ideal sheaf $\fn_{X/Y}$ instead. 
With slight abuse of notation, we still use the symbol $\^K_{X/Y}$ in the notation introduced 
in the next definition.

\begin{definition}
Let $f \colon X \to Y$ be a generically \'etale morphism of varieties.
We define the \emph{relative Mather discrepancy function}
(or \emph{Mather ramification function}) of $X$ over $Y$ to be the function
\[
\ord_{\^K_{X/Y}} \colon X_\infty \setminus (X_\sing)_\infty \to \Z
\] 
given by setting
\[
\ord_{\^K_{X/Y}}(\a) := \ord_{\~\a}(\fn_{X/Y})
\]
where $\~\a \in \^X_\infty$ is the unique lift of $\a$, which exists by the properness of $\n$
and the fact that the indeterminacy locus $\Bs(\n^{-1})$ of $\n^{-1}$ is 
contained in the singular locus $X_\sing$ of $X$.
\end{definition}

\begin{remark}
\label{r:ordjac}
The relative Mather discrepancy function defined above agrees with the \emph{order of the Jacobian}
function introduced in \cite[Chapter~5, Section~3.1]{CLNS18}
and used in \cite{Yas24,Yas21}. 
We prefer to keep a clear distinction, in our terminology, 
between this function and the function 
$\ord_{\Jac_f}$ defined by the order of the Jacobian
ideal of $f$, which is defined in full generality and typically differs from 
$\ord_{\^K_{X/Y}}$ if $X$ is not smooth.
\end{remark}

\begin{remark}
A word of caution on the choice of notation involving the symbol $\^K_{X/Y}$. 
As already remarked, the divisor $\^K_{X/Y}$ is only defined if $\n \o f$ factors through $\m$. Even then,
$\^K_{X/Y}$ is not a divisor on $X$ but on $\^X$. 
In good situations, one can ensure that $\n \o f$ factors through $\m$ by either
taking $f$ to factor through $\m$, or, when that is not possible,
by replacing $\n$ with a `higher' Nash blow-up of $X$. 
The resulting function agrees with the above definition as long as $\Bs(\n^{-1})$
remains contained in $X_\sing$. 
If there is a change in the base locus of $\n^{-1}$,
then the functions agree away from the arc space of the larger base locus, 
which has infinite codimension in $X_\infty$.
\end{remark}

\section{Differential maps on arc spaces}

This section is devoted to the following result,
which relates the relative Mather discrepancy function
to the differential map of the morphism induced at the arc level.

\begin{theorem}
\label{t:df-infty}
Let $f \colon X \to Y$ be a generically \'etale morphism of varieties
over a field $k$.
\begin{enumerate}
\item
\label{i:2a}
For every arc $\a \in X_\infty$ such that $X$ is nonsingular at $\a(\e)$ and $f$ is  
\'etale in a neighborhood of $\a(\e)$, the differential map 
\[
df_{\infty,\a} \colon \Om_{Y_\infty/k} \otimes \k_\a \to \Om_{X_\infty/k} \otimes \k_\a
\]
is surjective with kernel of dimension equal to $\ord_{\^K_{X/Y}}(\a)$. 
\item
\label{i:1}
As functions on $X_\infty \setminus (X_\sing)_\infty$, we have:
\begin{itemize}
\item
$\ord_{\^K_{X/Y}} = \ord_{K_{X/Y}}$ if both $X$ and $Y$ are nonsingular; 
\item
$\ord_{\^K_{X/Y}} = \ord_{\Jac_f}$ if $X$ is nonsingular; 
\item
$\ord_{\^K_{X/Y}} = \ord_{\Jac_f} - \ord_{\Jac_X}$ if $Y$ is nonsingular.
\end{itemize}
In general, we have
\[
\ord_{\Jac_f} - \ord_{\Jac_X} \le \ord_{\^K_{X/Y}} \le \min\{\ord_{\Jac_f}, \ord_{\Jac_f} - \ord_{\Jac_X} + \ord_{\Jac_Y} \}.
\] 
\end{enumerate}
\end{theorem}

\begin{proof}
For short, we denote $K := k_\a$ and 
\[
P_\infty := \frac{K(\hskip-1pt(t)\hskip-1pt)}{t K[[t]]},
\]
which we regard both as a module over $\O_{X_\infty}$ via $\a$
and as a module over $\O_{Y_\infty}$ via $f_\infty \o \a$.
Let $\b := f_\infty(\a)$; note that $k_\b \subset K$ via $f_\infty$.

Since $X$ is nonsingular at $\a(\e)$, we have  $\a \not\in (X_\sing)_\infty$,
hence $\ord_{\^K_{X/Y}}(\a)$ is defined.
To compute this number and relate it to $df_{\infty,\a}$, we use the isomorphisms 
\[
\Om_{X_\infty/k} \otimes k_\a \cong \Om_{X/k} \otimes P_\infty \cong P_\infty^d, 
\quad
\Om_{Y_\infty/k} \otimes k_\a  \cong \Om_{Y/k} \otimes P_\infty \cong P_\infty^d, 
\]
given by \cite{dFD20}, where $d = \dim X = \dim Y$.
We will compute these spaces in two steps, by first pulling back
along $\a$ and $f_\infty\o \a$ respectively (which corresponds to tensoring by $K[[t]]$),
and then tensoring by $P_\infty$ over $K[[t]]$. Note that $P_\infty$ is a $t$-divisible
$K[[t]]$-module, hence tensoring by it kills $K[[t]]$-torsion. 

We start from the exact sequence 
\[ 
f^*\Om_{Y/k} \to \Om_{X/k} \to \Om_{X/Y} \to 0.
\]
By pulling back along $\a$, which corresponds to tensoring by $k_\a[[t]]$, we
obtain the top exact row in the commutative diagram
\begin{equation}
\label{eq:1}
\xymatrix{
K[[t]]^d \oplus \bigoplus \frac{K[t]}{(t^{b_i})} \ar[r]^-\f 
&
K[[t]]^d \oplus \bigoplus \frac{K[t]}{(t^{a_i})} \ar[r]^-\ff \ar@{->>}[d] 
&
\bigoplus \frac{K[t]}{(t^{c_i})} \ar[r]
&
0
\\
K[[t]]^d \ar@{^(->}[u] \ar[r]^A
&
K[[t]]^d
&
&
}
\end{equation}
where the vertical arrows are the natural inclusion and projection
and $A \in K[[t]]^{d\times d}$ is the matrix giving 
the bottom $K[[t]]$-linear map. 
Here $d = \dim Y$ and $a_i$ (resp., $b_i$, $c_i$) are the invariant factors
of $\Om_{X/k}$ (resp., $\Om_{Y/k}$, $\Om_{X/Y}$) (cf.\ \cite[Section~6]{dFD20}), hence
\[
a := \sum a_i = \ord_\a(\Jac_X),
\quad
b := \sum b_i = \ord_\a(\Jac_Y),
\quad
c := \sum c_i = \ord_\a(\Jac_f).
\]
These are all finite orders since 
$X$ is nonsingular and $f$ is \'etale in a neighborhood of $\a(\e)$.
This last fact also implies that there is no free summand in the term
given by the pull back of $\Om_{X/Y}$, since $\a(\e)$ is
not contained in the support of this sheaf. 

Tensoring everything by $K(\hskip-1pt(t)\hskip-1pt)$, the vertical arrows become isomorphisms
and the bottom arrow gives a $K(\hskip-1pt(t)\hskip-1pt)$-linear map
\[
K(\hskip-1pt(t)\hskip-1pt)^d \xrightarrow{A} K(\hskip-1pt(t)\hskip-1pt)^d.
\]
We interpret this as the map obtained by pulling back the morphism $f^*\Om_{Y/k} \to \Om_{X/k}$
along the generic point $\a_\e \colon \Spec K(\hskip-1pt(t)\hskip-1pt) \to X$ of the arc. 
This map is an isomorphism because $f$ is \'etale in a neighborhood of the image of $\a_\e$. 
Therefore, we obtain a short exact sequence
\[
0 \to K[[t]]^d 
\xrightarrow{A} K[[t]]^d \to \bigoplus \frac{K[t]}{(t^{e_i})} \to 0.
\]
The matrix $A$ is the same as the one before, and we now see that it
is has coefficients in $K[[t]]$. Its determinant has order 
\[
\ord_t(\det A) = e := \sum e_i.
\] 

The differential map $df_{\infty,\a}$ is obtained from $\f$
by tensoring the sequence \eqref{eq:1} by $P_\infty$, which gives the exact sequence
\[
P_\infty^d \xrightarrow{A} P_\infty^d \to 0.
\]
This shows that $df_{\infty,\a}$ is surjective, with kernel of dimension 
\[
\dim_K(\ker(df_{\infty,\a})) = e.
\]
While not strictly necessary, the computation of the dimension of the kernel
becomes particularly transparent after diagonalizing $A$ over $k_\a[[t]]$. Without 
loss of generality, we may assume that bases have been chosen such that 
$A = \diag(t^{e_1},\dots,t^{e_d})$ with $\sum e_i = e$, and the computation reduces to computing 
\[
\dim_K\big(\Tor_1^{K[[t]]}\big(\tfrac{K[t]}{(t^{e_i})},P_\infty \big) \big) = e_i
\]
using the resolution $K[[t]] \xrightarrow{t^{e_i}} K[[t]]$. 

The next step is to compute $e$. 
By construction, we have a commutative diagram
\[
\xymatrix{
\a^*f^* \Om_{Y/k} \ar[r] \ar@{->>}[d]
& \a^* \Om_{X/k} \ar@{->>}[d]
\\
K[[t]]^d \ar[r]^A 
& K[[t]]^d 
}
\]
where the vertical arrows are given by killing the $t$-torsion.
Taking top exterior powers
and composing with the natural map $\n^*\Om_{X/k}^d \surj \O_{\^X}(1)$, we obtain the commutative diagram
\[
\xymatrix{
\a^*f^* \Om_{Y/k}^d \ar[r]^\lambda \ar@{->>}[d]^\s
& \a^* \Om_{X/k}^d \ar@{->>}[r]^-\m \ar@{->>}[d]^\t
& \~\a^* \O_{\^X}(1) \ar@{->>}[dl]^{\t'}
\\
K[[t]] \ar[r]^{\det A}
& K[[t]] 
& 
}.
\]
Here we use the fact that taking exterior powers preserves subjectivity and 
commutes with pullbacks. 
The fact that $\t$ factors through $\t'$ follows from the observation
that $\m$ is just the quotient map killing the $t$-torsion of $\a^* \Om_{X/k}^d$, and since 
$\~\a^* \O_{\^X}(1) \cong K[[t]]$ and $\t'$ is surjective, it follows that $\t'$
is an isomorphism. 
As $\im(\m \o \lambda) = \~\a^* ( \fn_{X/Y}\.\O_{\^X}(1))$, we conclude that 
\[
e = \ord_{\^K_{X/Y}}(\a).
\]
This proves property \eqref{i:2a} stated in the theorem.

Regarding \eqref{i:1}, first note that $\Jac_f = \O_X(-K_{X/Y})$ if both $X$ and $Y$ are nonsingular.
Thus, we are left to prove that:
\begin{itemize}
\item
$e = c$ if $a = 0$ (i.e., if $X$ is nonsingular at $\a(0)$);
\item
$e = c-a$ if $b = 0$ (i.e., if $Y$ is nonsingular at $\b(0)$);
\item
$c-a \le e \le \min\{c, c-a+b\}$ in general.
\end{itemize}
By construction, $e \le c$. The other estimates follow by looking at the short exact sequence
\[
0 \to
K[[t]]^d \oplus \bigoplus \frac{K[t]}{(t^{b_i'})} 
\to K[[t]]^d \oplus \bigoplus \frac{K[t]}{(t^{a_i})} 
\to \bigoplus \frac{K[t]}{(t^{c_i})} \to 0,
\]
obtained from \eqref{eq:1} by killing the kernel of $\f$, which is necessarily contained in the torsion 
summand $\bigoplus \frac{K[t]}{(t^{b_i})}$. Note that, by construction, $\sum b_i' \le \sum b_i$. 
Taking Tor into $P_\infty$, we obtain a long exact sequence of the form
\[
0 \to \bigoplus \frac{K[t]}{(t^{b_i'})} 
\to \bigoplus \frac{K[t]}{(t^{a_i})} 
\to \bigoplus \frac{K[t]}{(t^{c_i})} 
\to P_\infty^d 
\xrightarrow{A} P_\infty^d
\to 0,
\]
and the estimates on $e$ easily follow from here. 
\end{proof}

\begin{corollary}
\label{c:Tf-infty}
With the same assumptions as in \cref{t:df-infty}, 
assume that $k$ is perfect. Then the cotangent map 
\[
T_\a^*f_\infty \colon \fm_\b/\fm_\b^2 \otimes k_\a \to \fm_\a/\fm_\a^2 
\]
is surjective with kernel of dimension equal to $\ord_{\^K_{X/Y}}(\a)$. In particular, 
the embedding dimensions of the local rings at $\a$ and $\b$ satisfy the formula
\[
\embdim(\O_{Y_\infty,\b}) = \embdim(\O_{X_\infty,\a}) + \ord_{\^K_{X/Y}}(\a).
\]
\end{corollary}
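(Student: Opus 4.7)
The plan is to deduce the corollary directly from \cref{t:df-infty}(ii) by relating the cotangent space $\mathfrak{m}/\mathfrak{m}^2$ at each point to the module of Kähler differentials, and then invoking the snake lemma. For any point $p$ of a $k$-scheme $S$ with residue field $\kappa_p$, there is the standard right-exact conormal sequence
\[
\mathfrak{m}_p/\mathfrak{m}_p^2 \xrightarrow{d_p} \Omega_{S/k} \otimes_{\O_{S,p}} \kappa_p \to \Omega_{\kappa_p/k} \to 0,
\]
and the map $d_p$ is injective whenever $\kappa_p/k$ is $0$-smooth. Since $k$ is perfect, every field extension of $k$ is separable and hence $0$-smooth, so this applies at both $\alpha \in X_\infty$ and $\beta = f_\infty(\alpha) \in Y_\infty$, upgrading both sequences to short exact sequences.

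Next, I would use the identification $\kappa_\alpha = \kappa_\beta = K$ established in the proof of \cref{t:df-infty} (via the valuative criterion of properness) to observe that $f_\infty$ induces a commutative diagram whose rows are these two short exact sequences, with left column $T_\alpha^* f_\infty$, middle column $df_{\infty,\alpha}$, and right column the identity on $\Omega_{K/k}$. The snake lemma then produces isomorphisms $\ker(T_\alpha^* f_\infty) \cong \ker(df_{\infty,\alpha})$ and $\coker(T_\alpha^* f_\infty) \cong \coker(df_{\infty,\alpha})$. Combining this with \cref{t:df-infty}(ii) shows that $T_\alpha^* f_\infty$ is surjective with kernel of $K$-dimension $\ord_{\^K_{X/Y}}(\alpha)$. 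The embedding-dimension formula then follows by taking $K$-dimensions in the resulting short exact sequence
\[
0 \to \ker(T_\alpha^* f_\infty) \to \mathfrak{m}_\beta/\mathfrak{m}_\beta^2 \to \mathfrak{m}_\alpha/\mathfrak{m}_\alpha^2 \to 0.
\]

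The one delicate point is the left-exactness of the conormal sequence, which is exactly where the hypothesis that $k$ is perfect enters. Residue fields at points of arc spaces are typically not finitely generated over $k$, so one cannot invoke finite-type smoothness criteria; the relevant general fact is that every extension of a perfect field is separable, and that separable field extensions of $k$ are $0$-smooth. Everything else in the argument is formal, so once this input is in place the corollary is immediate.
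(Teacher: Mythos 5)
Your proposal is correct and follows essentially the same route as the paper: both set up the two conormal short exact sequences (justifying left-exactness via separability/$0$-smoothness of residue field extensions over the perfect base), form the commutative diagram with $T_\a^*f_\infty$, $df_{\infty,\a}$, and the isomorphism on $\Om_{k_\a/k}$ as the vertical maps, and apply the snake lemma together with \cref{t:df-infty}. No meaningful differences.
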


\begin{proof}
This follows from \cref{t:df-infty} and the snake lemma applied to the following
commutative diagram, which relates the cotangent map $T_\a^*f_\infty$ to 
the differential map $df_{\infty,\a}$:
\[
\xymatrix{
0 \ar[r] 
& \fm_\b/\fm_\b^2 \otimes k_\a \ar[r] \ar[d]^{T_\a^*f_\infty} 
& \Om_{Y_\infty/k} \otimes k_\a \ar[r] \ar[d]^{df_{\infty,\a}} 
& \Om_{k_\b/k} \ar[r] \ar[d]^{\Theta_\a}
& 0
\\
0 \ar[r] 
& \fm_\a/\fm_\a^2 \ar[r] 
& \Om_{X_\infty/k} \otimes k_\a \ar[r] 
& \Om_{k_\a/k} \ar[r] 
& 0
}
\]
The exactness of the rows follows from \cite[Theorem~25.2]{Mat89}; the assumption that $k$
is perfect is needed to ensure that the extensions $k_\a/k$ and $k_\b/k$ are separable, 
hence 0-smooth \cite[Theorems~26.3 and~26.9]{Mat89}.
\cite[Theorem~4.3]{CdFD24} implies that 
the field extension $k_\b \subset k_\a$ induced by $f_\infty$
is finite separable, hence $\Theta_\a$ is injective by \cite[Theorem~26.6]{Mat89}.
Then the snake lemma implies that the kernel of $T_\a^*f_\infty$ 
coincides with the kernel of $df_{\infty,\a}$, hence has dimension 
equal to $\ord_{\^K_{X/Y}}(\a)$ by \cref{t:df-infty}.
On the other hand, the cokernel of $T_\a^*f_\infty$ 
is contained in the cokernel of $df_{\infty,\a}$, which is zero by \cref{t:df-infty}, 
hence $T_\a^*f_\infty$ is surjective. 
\end{proof}

\begin{remark}
\label{r:relation-change-of-vars}
The last formula in \cref{c:Tf-infty} is closely related to the change-of-variables formula
in motivic integration \cite{DL99,Yas21}. 
The precise relation is understood using the interpretation of embedding dimension 
of arc spaces in terms of jet codimension, which is established in \cite{dFD20},
and the formula computing the fibers of the restrition to the set of liftable jets of the 
maps induced on jet schemes $f_m \colon X_m \to Y_m$, which lies at the core
of the proof of the motivic change-of-variable formula. 
\cref{c:Tf-infty} recovers the dimension of these fibers, but not their motivic
incarnation (i.e., the fact that these fibers are affine spaces).
\end{remark}

\section{$\^K$-equivalence}

In this section we introduce the notion of $\^K$-equivalence
and use motivic integration to show that arc spaces of $\^K$-equivalent varieties
have the same motivic volume.  

We begin by recalling the definition of $K$-equivalence in characteristic zero. 

\begin{definition}
Two birationally equivalent nonsingular proper varieties $Y$ and $Y'$ of characteristic zero
are said to be \emph{$K$-equivalent} if for every common resolution of singularities
$X$ (i.e., a nonsingular variety $X$ with proper birational morphisms $f \colon X \to Y$
and $f' \colon X \to Y'$), we have $K_{X/Y} = K_{X/Y'}$. 
\end{definition}

It is easy to see that it suffices to check the property for one common resolution. 
Also, it is a well-known consequence of the Negativity Lemma \cite[Lemma~3.39]{KM98} that
it suffices to require that $K_{X/Y} \equiv K_{X/Y'}$. 
The assumption on characteristic is needed to ensure the existence of a common resolution.

Here, we extend the definition in two directions, by allowing the varieties
to be arbitrarily singular and the characteristic to be positive,
without assuming the existence of resolutions of singularities.
We use the relative Mather canonical divisor introduced in \cref{s:Mather}.

\begin{definition}
\label{d:^K-equiv}
Two birationally equivalent proper varieties $Y$ and $Y'$ are said to be \emph{$\^K$-equivalent}
if for every variety $X$ with proper birational morphisms $f \colon X \to Y$
and $f' \colon X \to Y'$ inducing morphisms on Nash blow-ups
$\^f \colon \^X \to \^Y$ and $\^f' \colon \^X \to \^Y'$, we have $\^K_{X/Y} = \^K_{X/Y'}$.
\end{definition}

\begin{lemma}
To see whether two birationally equivalent proper varieties $Y$ and $Y'$
are $\^K$-equivalent, it suffices to check 
the defining property for a single model $X$.
\end{lemma}

\begin{proof}
Let $X$ be a variety as in the definition 
such that $\^K_{X/Y} = \^K_{X/Y}$.
Suppose $X'$ is another variety with proper birational morphisms $g \colon X' \to Y$
and $g' \colon X' \to Y'$ inducing morphisms on Nash blow-ups
$\^g \colon \^X' \to \^Y$ and $\^g' \colon \^X' \to \^Y'$.
Pick any variety $Z$ with proper birational morphisms $h \colon Z \to X$
and $h' \colon Z \to X'$ inducing morphisms on Nash blow-ups
$\^h \colon \^Z \to \^X$ and $\^h' \colon \^Z \to \^X'$. 
Using the additivity of relative Mather canonical divisors 
along compositions of proper birational morphisms (see \cref{l:^K-additive}), 
we see that $\^K_{X/Y} = \^K_{X/Y'}$ if and only if $\^K_{X'/Y} = \^K_{X'/Y'}$.
\end{proof}

\begin{remark}
If $Y$ and $Y'$ are nonsingular, then it follows by \cref{t:df-infty} that 
$\^K$-equivalence is equivalent to the condition
that there exists a variety $X$ with proper birational morphisms
$f \colon X \to Y$ and $f' \colon X \to Y'$ as in the definition, such that the Jacobian ideals
$\Jac_f$ and $\Jac_{f'}$ have the same integral closure in $\O_X$.
\end{remark}

\begin{remark}
In \cite{Yas21}, the definition of $K$-equivalence is extended to arbitrary characteristics
in the context of log pairs, by requiring the existence of common crepant model.
It is immediate to check that if $Y$ and $Y'$ are smooth, then
$K$-equivalence and $\^K$-equivalence are equivalent notions.
The definition of $K$-equivalence extends to normal varieties 
as long as their canonical classes are $\Q$-Cartier, but
in this generality it is no longer equivalent to the definition of $\^K$-equivalence. 
\end{remark}

We denote by $K_0(\Var_k)$ the \emph{Grothendieck ring of varieties} over $k$. 
This is given by the free abelian group generated by 
isomorphism classes of separated schemes of finite type
over $k$ modulo the relations $[X] = [X \setminus Z] + [Z]$ whenever
$Z \subset X$ is a closed subscheme, and $[X] + [Y] = [X \sqcup Y]$, with
product given by $[X]\.[Y] := [X \times Y]$. 
We set $\cM_k := K_0(\Var_k)[\L^{-1}]$
where $\L = [\A^1_k]$ is the class of the affine line. 
The \emph{motivic ring} $\^\cM_k$ is defined as the completion of $\cM_k$
with respect to the filtration $\cF^n \subset \^\cM_k$
generated, as a subgroup, by classes $[Y] \L^{-m}$
with $\dim Y - m \le -n$. 

As explained in \cite[Chapter~2, Section~(3.5.9)]{CLNS18},
there exists a function
\[
\EP \colon K_0(\Var_k) \to \Z[t]
\]
whose value on the class of a variety $X$ only depends on the isomorphism class of the variety.
The value of this function on the class of a smooth proper variety $X$ over $k$ is equal to
the \emph{Euler--Poincar\'e polynomial}
\[
\EP(X) = \sum_{n=0}^{2d} (-1)^n \dim H^n(X_{k^s},\Q_\ell)t^n
\]
where $d$ is the dimension of the variety, 
$k^s$ is a separable closure of $k$, and $\ell$ is a prime number. 
Furthermore, the function descends 
to a function defined on the image of $\cM_k$ in the motivic ring $\^\cM_k$
\cite[Chapter~2, Example~4.3.6]{CLNS18}.

\begin{theorem}
\label{t:^K-equiv}
If $Y$ and $Y'$ are $\^K$-equivalent varieties over a field $k$, then 
their arc spaces have the same motivic volume 
$\int_{Y_\infty} d\m = \int_{Y'_\infty} d\m$. 
In particular, if $Y$ and $Y'$ are smooth, then they define
the same motivic class $[Y] = [Y']$ in the motivic ring $\^\cM_k$, and
hence have the same Euler--Poincar\'e polynomial $\EP(Y) = \EP(Y')$.
\end{theorem}

\begin{proof}
Given a model $X$ as in the definition of $\^K$-equivalence, 
the motivic change-of-variables formula established in \cite{Yas24,Yas21} gives
\[
\int_{Y_\infty} d\m 
= \int_{X_\infty} \L^{-\ord_{\^K_{X/Y}}}d\m
= \int_{X_\infty} \L^{-\ord_{\^K_{X/Y'}}}d\m
= \int_{Y'_\infty} d\m, 
\]
and these are equal to $[Y]$ and $[Y']$ is the varieties are smooth. 
The last statement follows by the aforementioned results of \cite{CLNS18}.
\end{proof}

\begin{remark}
\label{r:flopping-contr}
A similar result is obtained in \cite{Yas21} assuming $Y$ and $Y'$ are smooth. 
One can regard \cref{t:^K-equiv} as a generalization of Yasuda's result to singular varieties. 
The same property does not hold, however, for $K$-equivalent varieties
if one allows singularities.
For instance, if $f \colon Y \to Y'$ is a crepant resolution, then $Y$ and $Y'$ are $K$-equivalent but 
in general we may have 
$\int_{Y_\infty} d\m \ne \int_{Y'_\infty} d\m$ and $[Y] \ne [Y']$ in $\^\cM_k$
(see \cref{e:ODP} for an explicit computation).
Note that in the case of a crepant resolution $Y \to Y'$, the two varieties 
$Y$ and $Y'$ are not $\^K$-equivalent unless $Y'$ is already smooth.
\end{remark}

\begin{example}
\label{e:ODP}
Let $f \colon Y \to Y'$ be a crepant resolution of an ordinary double point of dimension $d \in \{2, 3\}$. 
Let $p$ be the singular point of $Y'$ and $E \cong \P^1$ the exceptional locus of $f$. 
Let $U = Y \setminus E$ and $U' = f(U) = Y' \setminus \{p\}$. 
In either dimension, we have 
\[
[Y] = [U] + [E] = [U'] + [p] + \L = [Y'] + \L \ne [Y'].
\]
To compare motivic integrals, 
let $g \colon X \to Y$ be the blow-up of $E$ (an isomorphism if $d = 2$), $h = f \o g \colon X \to Y'$, 
and $F$ the exceptional divisor of $h$. 
Using that $Y'$ is locally complete intersection
and observing that $\ord_F(\Jac_{Y'}) = 1$, we see that
$\^K_{X/Y'} = K_{X/Y'} + F$ (cf.\ \cite[Section~3]{dFD14}). 
Note, on the other hand, that $K_{X/Y'} = K_{X/Y}$, since $K_{Y/Y'} = 0$. 
Using these facts, we compute
\[
\int_{Y_\infty} d\m = \int_{X_\infty} \L^{-\ord_{K_{X/Y}}} d\m 
= \int_{X_\infty} \L^{-\ord_{(\^K_{X/Y'}-F)}} d\m 
\ne \int_{X_\infty} \L^{-\ord_{\^K_{X/Y'}}} d\m
= \int_{Y'_\infty} d\m.
\]
\end{example}

For the purpose of this paper, 
a \emph{Calabi--Yau manifold} is a smooth projective variety $Y$ over a field $k$ such that $\om_Y \cong \O_Y$. 
The following property is well-known in characteristic zero
and is discussed in \cite{Yas21} in connection to $K$-equivalence. 

\begin{proposition}
\label{p:CY-J-equiv}
Birationally equivalent Calabi--Yau manifolds are $\^K$-equivalent. 
\end{proposition}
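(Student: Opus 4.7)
The plan is to invoke the preceding lemma: it suffices to exhibit a single variety $X$ with proper birational morphisms $f \colon X \to Y$ and $f' \colon X \to Y'$ for which $\^K_{X/Y} = \^K_{X/Y'}$. I would take $X$ to be the closure of the graph of the birational map $Y \rat Y'$ inside $Y \times Y'$ (which is projective), so that the two projections supply such $f$ and $f'$. Since $Y$ and $Y'$ are smooth, their Nash blow-ups $\m \colon \^Y \to Y$ and $\m' \colon \^Y' \to Y'$ are isomorphisms, so the factorization conditions needed to define $\^f \colon \^X \to \^Y$ and $\^f' \colon \^X \to \^Y'$ are automatic.

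Using the Calabi--Yau hypothesis, pick nowhere-vanishing global sections $\s_Y \in H^0(Y, \om_Y)$ and $\s_{Y'} \in H^0(Y', \om_{Y'})$. Under the trivialization $\O_{\^Y}(1) = \m^* \om_Y \cong \O_{\^Y}$ induced by $\s_Y$, the defining map of $\fn_{X/Y}$ from \cref{d:Nash-ideal}, after twisting by $\O_{\^X}(1)$, becomes a global section
\[
s_Y \in H^0\bigl(\^X, \O_{\^X}(1)\bigr),
\]
namely the image of $\n^* f^* \s_Y$ under the composition $\n^* f^* \om_Y \to \n^* \Om_{X/k}^d \surj \O_{\^X}(1)$. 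By \cref{l:^K}, the zero divisor of $s_Y$ is exactly $\^K_{X/Y}$. Analogously, $\s_{Y'}$ yields $s_{Y'} \in H^0(\^X, \O_{\^X}(1))$ with zero divisor $\^K_{X/Y'}$.

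The task then reduces to showing that the two sections $s_Y$ and $s_{Y'}$ of the same line bundle on the integral scheme $\^X$ differ by a unit. At the generic point $\e \in \^X$, identifying $\O_{\^X}(1) \otimes K \cong \wedge^d \Om_{K/k}$ with $K = K(\^X) = K(Y) = K(Y')$, the elements $s_Y|_\e$ and $s_{Y'}|_\e$ coincide with the images of $\s_Y$ and $\s_{Y'}$ under the natural inclusions $H^0(Y, \om_Y), H^0(Y', \om_{Y'}) \inj \wedge^d \Om_{K/k}$. By the birational invariance of global sections of the canonical sheaf for smooth proper varieties, these two $k$-subspaces of $\wedge^d \Om_{K/k}$ coincide, and being free of rank $1$ over the common field of constants $L := \G(Y, \O_Y) = \G(Y', \O_{Y'})$, there exists $c \in L^*$ with $\s_Y = c\,\s_{Y'}$. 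Consequently $s_Y = c\,s_{Y'}$ on all of $\^X$, and since $L$ embeds into $\G(\^X, \O_{\^X})$ as nowhere-vanishing regular functions, the two zero divisors coincide, yielding $\^K_{X/Y} = \^K_{X/Y'}$.

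The only delicate point is the invocation of the birational invariance of $H^0(\om)$ for smooth proper varieties in arbitrary characteristic. This is classical and follows by comparing divisorial valuations: any regular top form on the smooth proper $Y$, viewed through $K(Y) = K(Y')$ as a rational form on $Y'$, has no poles along a prime divisor $D' \subset Y'$ whose associated valuation has codimension-one center on $Y$ (the two valuations agree and the form is regular on $Y$), and none along a $D'$ contracted to a codimension $\ge 2$ locus on $Y$, since smoothness of $Y$ forces strictly positive discrepancy at such divisorial valuations.
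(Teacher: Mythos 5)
Your proof is correct, but it follows a genuinely different route from the paper's. The paper first observes that $\^K_{X/Y}$ and $\^K_{X/Y'}$ are both effective exceptional divisors with $\O_{\^X}(\^K_{X/Y}) \cong \O_{\^X}(1) \cong \O_{\^X}(\^K_{X/Y'})$ (the isomorphisms coming from the Calabi--Yau hypothesis via $\O_{\^Y}(1) = \om_Y \cong \O_Y$), so $\^K_{X/Y} \sim \^K_{X/Y'}$, and then invokes the Negativity Lemma applied to $\pm(\^K_{X/Y} - \^K_{X/Y'})$ along $\^f$ and $\^f'$ to upgrade linear equivalence to equality. You instead produce the defining sections $s_Y, s_{Y'} \in H^0(\^X, \O_{\^X}(1))$ directly from the canonical trivializations $\s_Y, \s_{Y'}$, and show they agree up to a global unit by appealing to the birational invariance of $H^0(\om)$ for smooth proper varieties. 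This sidesteps the Negativity Lemma entirely and gives the equality of divisors on the nose, at the cost of requiring a (classical but nontrivial) argument for $H^0(Y,\om_Y) = H^0(Y',\om_{Y'})$ inside $\wedge^d\Om_{K/k}$ valid in all characteristics; your sketch of that argument — extending the rational map $Y' \rat Y$ to a morphism near the generic point of any prime divisor of $Y'$, by smoothness of $Y'$ and properness of $Y$, so that the form pulls back regularly — is sound, though you could replace the appeal to ``strictly positive discrepancy'' by the weaker and cleaner observation that a pullback of a regular form is regular. One minor presentational point: the paper takes $X$ to be the \emph{normalization} of the graph closure, which is not needed for your argument since the smoothness of $Y$ and $Y'$ already guarantees the factorizations through Nash blow-ups; you should perhaps note, when concluding that $c$ is a global unit on $\^X$, that $\Gamma(\^X,\O_{\^X})$ is a field because $\^X$ is integral and proper over $k$, so the image of $L$ consists of units without any normality assumption on $\^X$.
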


\begin{proof}
The proof is analogous to that of the corresponding 
statement about $K$-equivalence in characteristic zero. 
Let $Y$ and $Y'$ be two birationally equivalent Calabi--Yau manifolds and let
$X$ be a normal 
projective variety with proper birational maps $f \colon X \to Y$ and $f' \colon X \to Y'$
(e.g., take $X$ to be the normalization of the closure of the graph
of the birational map $Y \rat Y'$ in $Y \times Y'$).
Let $\n \colon \^X \to X$ be the normalized Nash blow-up of $X$
and let $\^f \colon \^X \to Y$ and $\^f' \colon \^X \to Y'$ be the induced maps. 

Since $Y$ is nonsingular, its Nash blow-up $\^Y \to Y$ is the identity, hence
$\O_{\^Y}(1) = \om_Y$. Using then our assumption that $Y$ is Calabi--Yau, 
it follows that $\O_{\^Y}(1) \cong \O_Y$. Similarly, we have $\O_{\^Y'}(1) \cong \O_{Y'}$. 
Therefore we have
\[
\O_{\^X}(\^K_{X/Y}) \cong \O_{\^X}(1) \cong \O_{\^X}(\^K_{X/Y'}),
\]
hence $\^K_{X/Y} \sim \^K_{X/Y'}$. 
Note that $\^K_{X/Y}$ is effective and $\^f$-exceptional, 
and similarly, $\^K_{X/Y'}$ is effective and $\^f'$-exceptional
(cf.\ \cref{r:^K-exceptional}).
Applying the Negativity lemma \cite[Lemma~3.39]{KM98}
to $\pm (\^K_{X/Y} - \^K_{X/Y'})$ along $\^f$ and $\^f'$, 
we conclude that $\^K_{X/Y} = \^K_{X/Y'}$. 
\end{proof}

The following corollary also appears in \cite{Yas21}.

\begin{corollary}
\label{c:bir-CY}
Any two birationally equivalent Calabi--Yau manifolds $Y$ and $Y'$ over a field $k$
define the same class $[Y] = [Y']$ in $\^\cM_k$ and
have the same Euler--Poincar\'e polynomial $\EP(Y) = \EP(Y')$. 
\end{corollary}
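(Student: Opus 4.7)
The plan is to invoke the two main results immediately preceding the statement and chain them together. First, by \cref{p:CY-J-equiv}, any two birationally equivalent Calabi--Yau manifolds $Y$ and $Y'$ are $\^K$-equivalent; this is exactly the content of that proposition, which uses the triviality of the canonical bundles (hence of $\O_{\^Y}(1)$ and $\O_{\^Y'}(1)$, since the Nash blow-ups are isomorphisms) together with the Negativity Lemma applied to $\pm(\^K_{X/Y} - \^K_{X/Y'})$.

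Next, I would apply \cref{t:^K-equiv} to the pair $(Y, Y')$. Since $Y$ and $Y'$ are smooth and projective (hence in particular projective), the hypotheses of \cref{t:^K-equiv} are satisfied, yielding the equality of motivic classes $[Y] = [Y']$ in $\^\cM_k$.

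For the last assertion, I would invoke the existence of the Euler--Poincar\'e polynomial map $\EP \colon \^\cM_k \to \Z[t]$ recalled just before \cref{t:^K-equiv}, citing \cite[Chapter~2, Section~(3.5.9) and Example~4.3.6]{CLNS18}. Since $Y$ and $Y'$ are smooth and projective, the value of $\EP$ on $[Y]$ (resp.\ $[Y']$) in $\^\cM_k$ is indeed the Euler--Poincar\'e polynomial of the variety, and the equality $[Y] = [Y']$ in $\^\cM_k$ therefore gives $\EP(Y) = \EP(Y')$.

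There is essentially no obstacle here: the corollary is a two-line consequence of \cref{p:CY-J-equiv} and \cref{t:^K-equiv}, with the second assertion reduced to quoting the functoriality of $\EP$ on the motivic ring. The only thing to double-check is that the normality/projectivity hypotheses used in \cref{p:CY-J-equiv} and \cref{t:^K-equiv} are met, which is immediate since Calabi--Yau manifolds are by assumption smooth and projective.
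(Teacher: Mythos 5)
Your proposal is correct and matches the paper's approach exactly: the corollary is a direct consequence of \cref{p:CY-J-equiv} (birational Calabi--Yau manifolds are $\^K$-equivalent) combined with \cref{t:^K-equiv} (which gives both $[Y]=[Y']$ in $\^\cM_k$ and $\EP(Y)=\EP(Y')$ for $\^K$-equivalent projective varieties), and your hypothesis-checking is right since Calabi--Yau manifolds are by definition smooth and projective.
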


The following remark was suggested by Docampo. 

\begin{remark}
One may consider the class of varieties $X$ such that $\O_{\^X}(1) \cong \O_{\^X}$. 
If such an $X$ is smooth, then it is a Calabi--Yau manifold, 
hence this class of varieties can be considered as a new 
generalization of Calabi--Yau manifolds to singular varieties. 
The interest in this notion stems from the property that any two such varieties that
are birationally equivalent are automatically $\^K$-equivalent, 
and hence \cref{t:^K-equiv} applies to them. 
It should be noted that singular Calabi--Yau varieties (i.e., normal projective 
varieties with trivial canonical sheaf) do not belong to this class. 
In fact, if $\O_{\^X}(1) \cong \O_{\^X}$ holds, then the Nash blow-up 
$\n \colon \^X \to X$ must be a finite map. This is a strong restriction
on the singularities of $X$; in particular, if $X$ is singular, then 
it cannot be normal. 
An example is given by a singular curve of geometric genus one
with only nodes, or more, generally, with ordinary multiple points (i.e., 
singular points with smooth analytic branches).
It is worth noting that this property does not behave well in families; 
one can consider for instance the example of a family of smooth plane cubics
degenerating to a singular cubic.
\end{remark}

\begin{bibdiv}
\begin{biblist}

\bib{AB23}{article}{
   author={Addington, Nicolas},
   author={Bragg, Daniel},
   author={Petrov, Alexander},
   title={Hodge numbers are not derived invariants in positive
   characteristic},
   journal={Math. Ann.},
   volume={387},
   date={2023},
   number={1-2},
   pages={847--878},
}

\bib{Bat99}{article}{
   author={Batyrev, Victor V.},
   title={Non-Archimedean integrals and stringy Euler numbers of
   log-terminal pairs},
   journal={J. Eur. Math. Soc. (JEMS)},
   volume={1},
   date={1999},
   number={1},
   pages={5--33},
}

\bib{Bit04}{article}{
   author={Bittner, Franziska},
   title={The universal Euler characteristic for varieties of characteristic
   zero},
   journal={Compos. Math.},
   volume={140},
   date={2004},
   number={4},
   pages={1011--1032},
}

\bib{CLNS18}{book}{
   author={Chambert-Loir, Antoine},
   author={Nicaise, Johannes},
   author={Sebag, Julien},
   title={Motivic integration},
   series={Progress in Mathematics},
   volume={325},
   publisher={Birkh\"auser/Springer, New York},
   date={2018},
}

\bib{CdFD22}{article}{
   author={Chiu, Christopher},
   author={de Fernex, Tommaso},
   author={Docampo, Roi},
   title={Embedding codimension of the space of arcs},
   journal={Forum Math. Pi},
   volume={10},
   date={2022},
   pages={Paper No. e4, 37},
}

\bib{CdFD24}{article}{
   author={Chiu, Christopher},
   author={de Fernex, Tommaso},
   author={Docampo, Roi},
   title={On arc fibers of morphisms of schemes},
   journal={J. Eur. Math. Soc. (JEMS)},
   volume={28},
   date={2026},
   number={4},
   pages={1489--1531},
}

\bib{dFD14}{article}{
   author={de Fernex, T.},
   author={Docampo, R.},
   title={Jacobian discrepancies and rational singularities},
   journal={J. Eur. Math. Soc. (JEMS)},
   volume={16},
   date={2014},
   number={1},
   pages={165--199},
}

\bib{dFD20}{article}{
author={de Fernex, Tommaso},
author={Docampo, Roi},
title={Differentials on the arc space},
journal={Duke Math. J.},
volume={169},
date={2020},
number={2},
pages={353--396},
}

\bib{dFEI08}{article}{
   author={de Fernex, Tommaso},
   author={Ein, Lawrence},
   author={Ishii, Shihoko},
   title={Divisorial valuations via arcs},
   journal={Publ. Res. Inst. Math. Sci.},
   volume={44},
   date={2008},
   number={2},
   pages={425--448},
}

\bib{DL99}{article}{
author={Denef, Jan},
author={Loeser, Fran{\c{c}}ois},
title={Germs of arcs on singular algebraic varieties and motivic
integration},
journal={Invent. Math.},
volume={135},
date={1999},
number={1},
pages={201--232},
}

\bib{EM09}{article}{
   author={Ein, Lawrence},
   author={Musta{\c{t}}{\u{a}}, Mircea},
   title={Jet schemes and singularities},
   conference={
      title={Algebraic geometry---Seattle 2005. Part 2},
   },
   book={
      series={Proc. Sympos. Pure Math.},
      volume={80},
      publisher={Amer. Math. Soc.},
      place={Providence, RI},
   },
   date={2009},
   pages={505--546},
}

\bib{Kaw02}{article}{
   author={Kawamata, Yujiro},
   title={$D$-equivalence and $K$-equivalence},
   journal={J. Differential Geom.},
   volume={61},
   date={2002},
   number={1},
   pages={147--171},
}

\bib{KM98}{book}{
   author={Koll{\'a}r, J{\'a}nos},
   author={Mori, Shigefumi},
   title={Birational geometry of algebraic varieties},
   series={Cambridge Tracts in Mathematics},
   volume={134},
   note={With the collaboration of C. H. Clemens and A. Corti;
   Translated from the 1998 Japanese original},
   publisher={Cambridge University Press},
   place={Cambridge},
   date={1998},
   pages={viii+254},
}

\bib{Kon95}{book}{
   author={Kontsevich, Maxim},
   title={String cohomology},
   note={Lecture at Orsay},
   year={1995},
}

\bib{Mac74}{article}{
   author={MacPherson, R. D.},
   title={Chern classes for singular algebraic varieties},
   journal={Ann. of Math. (2)},
   volume={100},
   date={1974},
   pages={423--432},
}

\bib{Mat70}{article}{
   author={Mather, John},
   title={Notes on Topological Stability},
   note={Mimeographed notes, Harvard University},
   date={1970},
}

\bib{Mat89}{book}{
author={Matsumura, Hideyuki},
title={Commutative ring theory},
series={Cambridge Studies in Advanced Mathematics},
volume={8},
edition={2},
note={Translated from the Japanese by M. Reid},
publisher={Cambridge University Press, Cambridge},
date={1989},
}

\bib{Orl05}{article}{
   author={Orlov, D. O.},
   title={Derived categories of coherent sheaves, and motives},
   language={Russian},
   journal={Uspekhi Mat. Nauk},
   volume={60},
   date={2005},
   number={6(366)},
   pages={231--232},
   issn={0042-1316},
   translation={
      journal={Russian Math. Surveys},
      volume={60},
      date={2005},
      number={6},
      pages={1242--1244},
   },
}

\bib{SU21}{article}{
   author={Satriano, Matthew},
   author={Usatine, Jeremy},
   title={A motivic change of variables formula for Artin stacks},
   date={2021},
   note={Preprint, available as {\tt arXiv:2109.09800}}
}

\bib{SU23}{article}{
   author={Satriano, Matthew},
   author={Usatine, Jeremy},
   title={Motivic integration for singular Artin stacks},
   date={2023},
   note={Preprint, available as {\tt arXiv:2309.11442}}
}

\bib{Yas21}{article}{
   author={Yasuda, Takehiko},
   title={Motivic integration, the McKay correspondence and wild ramification},
   note={Draft dated 2021 of a book in preparation, available on the author's homepage},
   date={2021},
}

\bib{Yas24}{article}{
   author={Yasuda, Takehiko},
   title={Motivic integration over wild Deligne-Mumford stacks},
   journal={Algebr. Geom.},
   volume={11},
   date={2024},
   number={2},
   pages={178--255},
}

\end{biblist}
\end{bibdiv}

\end{document}